\theoremstyle{plain}
\newtheorem{theorem}{Theorem}
\newtheorem{lemma}{Lemma}
\newtheorem{corollary}{Corollary}[theorem]
\newtheorem{question}{Question}
\theoremstyle{definition}
\newtheorem{example}{Example}
\title{On groups with modular Schmidt subgroups}
\author{Victor S. Monakhov}
\author{Irina L. Sokhor}
\affil{Department of Mathematics and Technologies of Programming,\\
Francisk Skorina Gomel State University, Belarus}
\affil{victor.monakhov@gmail.com, irina.sokhor@gmail.com}
\date{}
\begin{document}
\maketitle

\abstract{We prove that if every Schmidt subgroup of a group $G$ is subnormal or modular,
then $G/F(G)$ is cyclic.}



\section{Introduction}\label{sec1}

All groups in this paper are finite.

A group $G$ is called a Schmidt group
if $G$ is non-nilpotent and every proper subgroup of $G$ is nilpotent.
B.~Huppert gave the separate section of his monograph~\cite{hup}
to Schmidt groups.
V.\,S.~Monakhov~\cite{monsch} presented a review of the results about
properties of Schmidt groups, the existence of Schmidt subgroups
in finite groups and their some applications in the theory of classes
of finite groups.

Groups with subnormal Schmidt subgroups were investigated
in~\cite{knmon2004,ved2007}.
In particular, V.\,A.~Vedernikov proved the following statement.

\medskip

{\sl If all Schmidt subgroups of a group $G$ are
subnormal in $G$, then~$G/F(G)$ is cyclic,}~\cite{ved2007}.

\medskip

Here $F(G)$ is the Fitting subgroup of a group~$G$.

The results of~\cite{knmon2004,ved2007} developed in
various directions. Properties of groups with  partially subnormal Schmidt subgroups
were described in~\cite{hhskiba2021}. Groups with $\sigma$-subnormal Schmidt subgroups
were investigated in~\cite{as17,hh17,ky20,bky22}. 
Groups with modular Schmidt subgroups were studied in~\cite{hh17,blsel2019,hu2021}.
In~\cite{hhskiba2021}, it was proposed the problem to describe
the structure of groups with all Schmidt subgroups modular.
In particular, 

\begin{question}[{\cite[p.~175]{hhskiba2021}}]
Is the quotient group $G/F(G)$ is cyclic for a group $G$ with all Schmidt subgroups modular?
\end{question}




\medskip

The following theorem is our result.

\begin{theorem}\label{t1}
If every Schmidt subgroup of a group $G$ is
subnormal or modular, then $G/F(G)$ is cyclic.
\end{theorem}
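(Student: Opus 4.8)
The plan is to argue by induction on $|G|$, so let $G$ be a counterexample of least order and put $F=F(G)$; thus $\bar G:=G/F$ is non-cyclic. First I would record the facts that make the induction run: both subnormality and modularity are inherited by homomorphic images, and (by the standard lifting property of Schmidt subgroups) every Schmidt subgroup of a quotient $G/N$ has the form $SN/N$ for some Schmidt subgroup $S$ of $G$; hence the hypothesis passes to every proper quotient of $G$, so by minimality $(G/N)/F(G/N)$ is cyclic whenever $1\neq N\trianglelefteq G$. Since $F(G/N)=F(G)/N$ when $N\leq\Phi(G)$, this forces $\Phi(G)=1$. I would then dispose of the non-soluble case: if $T$ is a non-abelian composition factor and $H^{*}/K\cong T^{m}$ is a chief factor of $G$, then $S_{T}\times 1\times\cdots\times 1$ (where $S_{T}$ is a Schmidt subgroup of $T$) is a core-free Schmidt subgroup of $T^{m}$, hence a Schmidt subgroup of $G/K$, hence equal to $SK/K$ for some Schmidt subgroup $S$ of $G$; but $S$ is not subnormal in $G$ (its image would be a non-trivial core-free subnormal subgroup of $T^{m}$, of which there are none) and $S$ is not modular in $G$ (its image meets a direct factor in a proper non-trivial modular subgroup of the simple group $T$, and simple groups have none). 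So $G$ is soluble; with $\Phi(G)=1$ this gives $F=\operatorname{Soc}(G)=L_{1}\times\cdots\times L_{k}$, a product of elementary abelian minimal normal subgroups with $C_{G}(F)=F$ and $G=F\rtimes H$, where $H\cong\bar G$ is non-cyclic and acts faithfully on $F$.

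The heart of the proof would be the lemma: if $S=P\rtimes Q$ is a modular Schmidt subgroup of $G$, with $P=O_{p}(S)$ its normal Sylow $p$-subgroup and $Q$ its cyclic Sylow $q$-subgroup, then $P\leq F(G)$. The corresponding statement for a subnormal Schmidt subgroup is immediate, since $P$ is characteristic in $S$ and hence subnormal in $G$, so $P\leq O_{p}(G)\leq F(G)$. Granting the lemma, every Schmidt subgroup $S=P\rtimes Q$ of $G$ satisfies $P\leq F$, so $SF/F=QF/F$ is a $q$-group; therefore $\bar G$ contains no Schmidt subgroup at all (one would lift to such an $S$), i.e.\ $\bar G=G/F(G)$ is nilpotent.

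To finish, I would contradict the assumption that the nilpotent group $H=G/F$ is non-cyclic. Choosing a prime $r$ with $O_{r}(H)$ non-cyclic and an element $z$ of order $r$ in $O_{r}(H)$, faithfulness of $H$ on $F=\prod_{p}O_{p}(G)$ forces $z$ to act non-trivially on $F$; whenever $z$ acts non-trivially on some $O_{p}(G)$ with $p\neq r$ it acts fixed-point-freely on an irreducible $\langle z\rangle$-submodule $F_{0}\leq O_{p}(G)$, so that $S=F_{0}\rtimes\langle z\rangle$ is a Schmidt $(p,r)$-subgroup of $G$, whose subnormality or modularity in $G$ (analysed via the techniques behind Vedernikov's theorem in the subnormal case, and via the structure theory of modular subgroups in the modular case) imposes strong restrictions on the $\langle z\rangle$-module $F_{0}$ and on its embedding. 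Carrying this out over all such $z$ and all layers $O_{p}(G)$, and separately treating the case in which $z$ centralizes the $r'$-part of $F$, should force $O_{r}(H)$ to be cyclic for every $r$, and then $H=G/F(G)$ to be cyclic --- the desired contradiction.

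The step I expect to be the main obstacle is the lemma of the second paragraph. One must exploit the detailed structure of modular subgroups --- a modular subgroup $S$ sits in a chain $S=M_{0}\leq M_{1}\leq\cdots\leq M_{n}=G$ in which each step is either a normal inclusion or a ``$P$-group step'' (with $M_{i+1}/(M_{i})_{M_{i+1}}$ a non-abelian $P$-group and $M_{i}/(M_{i})_{M_{i+1}}$ of prime order) --- together with the available descriptions of modular Schmidt subgroups from \cite{blsel2019,hu2021,hh17}. The delicate point is that the normal elementary abelian part of every $P$-group section occurring along such a chain for $S$ is a $p$-subgroup that is forced into $O_{p}(G)\leq F(G)$, so that each such step degenerates modulo $F(G)$; making this rigorous requires careful tracking of the nested cores $(M_{i})_{M_{i+1}}$ and of which prime plays the role of ``$p$'' at each step. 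Everything else --- the inductive reductions, the solubility argument, and the passage from nilpotency to cyclicity --- is either routine or a direct appeal to Vedernikov's theorem and to standard facts on subnormal cyclic subgroups.
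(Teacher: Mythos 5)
Your proposal contains two genuine gaps. The first is the lemma you yourself flag as ``the main obstacle'': that a modular Schmidt subgroup $S=P\rtimes Q$ has $P\le F(G)$. The chain-of-$\mathrm{P}$-group-steps analysis you sketch for it is not needed and would be painful to make rigorous; the clean route is Zimmermann's result (Lemma~\ref{lm1} of the paper) that for a modular subgroup $H$ the subgroups $H'$ and $H^{\mathfrak{A}_1}$ are subnormal in $G$. Since $S'=P$ by the structure of Schmidt groups, $P$ is a nilpotent subnormal subgroup of $G$, hence lies in $F(G)$ --- one line. So this gap is fillable, but as written the central lemma of your argument is unproved. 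Your deduction from it that $G/F(G)$ has no Schmidt subgroups and is therefore nilpotent is correct.

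The second gap is not fillable as stated: the entire passage from ``$G/F(G)$ is nilpotent'' to ``$G/F(G)$ is cyclic'' is a plan, not a proof (``imposes strong restrictions \dots should force \dots the desired contradiction''). This is precisely the hard part --- it is the content of Vedernikov's theorem in the all-subnormal case, and you would have to redo that analysis in the mixed subnormal/modular setting, handling lifts of elements of $O_r(G/F)$ whose preimages need not have order $r$, elements centralizing the $r'$-part of $F$, etc. The paper avoids all of this by exploiting a much stronger consequence of modularity: if $M=P\rtimes Q$ is a non-subnormal modular Schmidt subgroup, then by Schmidt's Theorem~5.1.14 the quotient $G/M_G$ is a single non-abelian $\mathrm{P}$-group of order $p_1^nq$ with $|G|=|M|p_1^n$ (Lemma~\ref{tsch}(2)). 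This immediately gives $|\pi(G)|\le 3$ and reduces the theorem to a short explicit case analysis ($|\pi(G)|=2$ or $3$), in the latter case decomposing the Sylow $p_1$-subgroup by Maschke and applying Lemma~\ref{lm1} to the resulting small Schmidt subgroups $\langle a_i\rangle\rtimes Q_i$. Your global reduction (minimal counterexample, $\Phi(G)=1$, solubility) is mostly correct but does no work toward closing this final step; without it the proof is incomplete.
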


\begin{corollary}
If every Schmidt subgroup of a group $G$ is modular, then $G/F(G)$ is cyclic.
\end{corollary}

\section{Preliminaries}

The concept of a modular subgroup came from the lattice theory.
A subgroup $M$  of a group $G$ is {\sl modular} in $G$ if
$M$ is a modular element of the lattice of subgroups of~$G$.
The detailed analysis of modular subgroups was conducted by R.~Schmidt~\cite{sch94}.

We say that a group $G$ is a $\mathrm{P}$-group
if $G$ is either elementary abelian of order $p^{n+1}$ for a prime $p$
or a semidirect product of an elementary abelian normal subgroup $A$
of order $p^n$ by a group of prime order $q$, $q\neq p$,
which induces a nontrivial power automorphism on~$A$ \cite[p.~49]{sch94}.

\begin{lemma}[{\cite[Theorem~5.1.14]{sch94}}]\label{lsch2}
A subgroup  $M$ of a group $G$ is modular in~$G$ if and only if
$$
G/M_G=S_1/M_G\times \ldots \times S_r/M_G\times T/M_G,
$$
where $r\in \mathbb N\cup \{0\}$ and for all $i,j\in \{1,\ldots ,r\}$,

$(1)$~$S_i/M_G$ is a nonabelian $\mathrm{P}$-group,

$(2)$~$(|S_i/M_G|,|S_j/M_G|)=(|S_i/M_G|,|T/M_G|)=1$ for $i\ne j$,

$(3)$~$M/M_G=Q_1/M_G\times\ldots \times Q_r/M_G\times (T\cap M)/M_G$
and $Q_i/M_G$ is a non-normal Sylow subgroup of $S_i/M_G$, and

$(4)$~$M\cap T$ is permutable in~$G$.
\end{lemma}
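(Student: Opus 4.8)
The plan is to prove both implications after first reducing to the core-free situation. Since modularity is a property of the element $M$ in the subgroup lattice $L(G)$, and for the normal subgroup $M_G$ contained in $M$ one has the correspondence $X\mapsto X/M_G$ identifying the interval $[M_G,G]$ of $L(G)$ with $L(G/M_G)$, the subgroup $M$ is modular in $G$ if and only if $M/M_G$ is modular in $G/M_G$. Hence I would replace $G$ by $G/M_G$ and $M$ by $M/M_G$ and prove the equivalence under the assumption $M_G=1$; the quotients $S_i/M_G$ and $T/M_G$ in the statement then become honest subgroups $S_i,T$ of $G$, and the asserted decomposition reads $G=S_1\times\cdots\times S_r\times T$, with the $S_i$ nonabelian $\mathrm P$-groups of pairwise coprime orders coprime to $|T|$, with $M=Q_1\times\cdots\times Q_r\times(M\cap T)$, each $Q_i$ a non-normal Sylow subgroup of $S_i$, and $M\cap T$ permutable in $G$.

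For the sufficiency direction I would exploit the coprimality conditions. When $G$ is the direct product of subgroups of pairwise coprime orders, every subgroup is the direct product of its intersections with the factors, so $L(G)$ is the direct product of the lattices $L(S_1),\dots,L(S_r),L(T)$, and an element is modular in $L(G)$ exactly when each of its components is modular in the corresponding factor. Thus it suffices to check that each $Q_i$ is modular in $S_i$ and that $M\cap T$ is modular in $T$. The first holds because a $\mathrm P$-group has modular subgroup lattice (Iwasawa's theorem), so every one of its subgroups, in particular the Sylow subgroup $Q_i$, is a modular element; the second holds because $M\cap T$ is permutable in $G$, hence in $T$, and a permutable subgroup is always modular. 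Assembling these componentwise statements yields modularity of $M$ in $G$.

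The necessity direction is where the real work lies, and I expect it to be the main obstacle. Starting from a core-free modular $M$, I would first extract the combinatorial content of the two modular laws: for $x\le y$ one has $\langle x,M\rangle\cap y=\langle x,M\cap y\rangle$, and for $M\le y$ one has $\langle M,x\rangle\cap y=\langle M,x\cap y\rangle$. These force $M$ to permute with many cyclic subgroups and severely constrain how $M$ meets Sylow subgroups. The strategy is to partition the primes dividing $|G|$ into those for which $M$ behaves permutably and those for which it does not; for each prime of the second kind I would show that the relevant local structure is a nonabelian $\mathrm P$-group $S_i$ in which $M\cap S_i$ is a non-normal Sylow subgroup $Q_i$, the $\mathrm P$-group structure emerging exactly as in Iwasawa's classification because no nonmodular sublattice can occur. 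The delicate points are proving that the $S_i$ split off as direct factors of pairwise coprime order, coprime to the remaining part, and that the leftover subgroup $T$ carries $M\cap T$ as a genuinely permutable subgroup; this requires controlling the normal closure $M^G$ and the action of $M$ on chief factors, and is the technical heart of the argument.

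Finally, I would assemble the two directions into the stated equivalence, allowing $r=0$ (whence $r\in\mathbb N\cup\{0\}$) to cover the case where $M$ is already permutable and no $\mathrm P$-group factor appears. To repeat, the main obstacle is the necessity direction: producing the coprime direct decomposition and certifying that precisely the nonpermutable primes give rise to nonabelian $\mathrm P$-groups meeting $M$ in non-normal Sylow subgroups, while everything else collapses into the permutable part $T$.
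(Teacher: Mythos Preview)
The paper does not prove this lemma at all; it is quoted as Theorem~5.1.14 of Schmidt's monograph \cite{sch94} and used as a black box, so there is no proof in the paper to compare against.

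Regarding your proposal on its own merits: the reduction to the core-free case via the lattice isomorphism $[M_G,G]\cong L(G/M_G)$ is correct, and your sufficiency argument is fine --- the coprimality forces $L(G)$ to be the direct product of the $L(S_i)$ and $L(T)$, Iwasawa's theorem makes every subgroup of a $\mathrm{P}$-group modular, and permutable implies modular handles the $T$-component. The necessity direction, however, is not a proof but a programme. Sentences such as ``I would show that the relevant local structure is a nonabelian $\mathrm P$-group'' and ``this requires controlling the normal closure $M^G$ \dots\ and is the technical heart of the argument'' are declarations of intent; none of the actual work --- producing a single direct $\mathrm P$-factor from a core-free modular subgroup, showing the factors have pairwise coprime orders coprime to the rest, and proving the residual intersection is genuinely permutable in the whole group rather than just in $T$ --- is carried out. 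In Schmidt's book this direction spans a chain of preparatory results (roughly 5.1.5--5.1.13) culminating in an induction, and nothing in your outline substitutes for that machinery. As written, the proposal establishes only the easy half of the equivalence.
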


In the sequel, by $\mathfrak{A}_1$
we denote  a non-saturated formation
of all abelian groups with elementary abelian Sylow subgroups,
by $G^{\mathfrak{A}_1}$ we denote the $\mathfrak{A}_1$-residual
of a group~$G$, i.\,e. the smallest normal subgroup of $G$
with quotient in~$\mathfrak{A}_1$.

\begin{lemma}[{\cite[Lemma~4]{zim89}}]\label{lm1}
If $H$ is a modular subgroup of a group~$G$, then $H^\prime $ and $H^{\mathfrak{A}_1}$ are subnormal in~$G$.
\end{lemma}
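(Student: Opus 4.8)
The plan is to read both residuals directly off Schmidt's structure theorem for modular subgroups. Set $N = H_G$ and apply Lemma~\ref{lsch2} to $M = H$:
$$G/N = S_1/N\times\ldots\times S_r/N\times T/N,\qquad H/N = Q_1/N\times\ldots\times Q_r/N\times (H\cap T)/N,$$
with each $S_i/N$ a nonabelian $\mathrm P$-group, $Q_i/N$ a non-normal Sylow subgroup of $S_i/N$, and $H\cap T$ permutable in $G$. The first observation I would make is that, by the definition of a (nonabelian) $\mathrm P$-group used in the paper, the complement has prime order, so each $Q_i/N$ is cyclic of prime order; hence $(Q_i/N)' = 1$ and $Q_i/N\in\mathfrak A_1$, so $(Q_i/N)^{\mathfrak A_1} = 1$. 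Passing to derived subgroups and to $\mathfrak A_1$-residuals through the direct decomposition of $H/N$ (using that $\mathfrak A_1$ is a subgroup-closed formation, so residuals are monotone and respect epimorphisms and direct products) gives $H'N/N = (H/N)' = \bigl((H\cap T)/N\bigr)'$ and $H^{\mathfrak A_1}N/N = (H/N)^{\mathfrak A_1}\le\bigl((H\cap T)/N\bigr)^{\mathfrak A_1}$, whence $H'N = (H\cap T)'N$ and, since also $(H\cap T)^{\mathfrak A_1}\le H^{\mathfrak A_1}$, $H^{\mathfrak A_1}N = (H\cap T)^{\mathfrak A_1}N$. Thus everything is pushed into the permutable part $H\cap T$.

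Next I would invoke the classical fact that a permutable subgroup of a finite group is subnormal: so $H\cap T$ is subnormal in $G$, and since a characteristic subgroup of a subnormal subgroup is again subnormal (easy induction on the defect), both $(H\cap T)'$ and $(H\cap T)^{\mathfrak A_1}$ are subnormal in $G$. As $N = H_G\trianglelefteq G$ and the product of a subnormal subgroup with a normal subgroup is subnormal, it follows that $H'N = (H\cap T)'N$ and $H^{\mathfrak A_1}N = (H\cap T)^{\mathfrak A_1}N$ are subnormal in $G$. To finish I would descend from $H'N$ to $H'$: since $H'\trianglelefteq H$ and $N\trianglelefteq G$ with $N\le H$, one has $[H'N,H']\le[H',H'][N,H']\le H'(N\cap H')\le H'$, so $H'\trianglelefteq H'N$; being normal in a subnormal subgroup, $H'$ is subnormal in $G$. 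The identical computation with $H^{\mathfrak A_1}$ in place of $H'$ handles the second residual, and the degenerate case $r = 0$ (where $H = H\cap T$ is itself permutable, hence subnormal) is covered by the same argument.

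The proof is short once Lemma~\ref{lsch2} is in hand, so the only places needing care — the closest thing to an obstacle — are the two standard subnormality facts I am relying on (permutable $\Rightarrow$ subnormal, and characteristic-in-subnormal $\Rightarrow$ subnormal), together with the bookkeeping that the $\mathrm P$-group direct factors contribute trivially because their non-normal Sylow subgroups have prime order, and the final commutator check that lets one replace $H'N$ and $H^{\mathfrak A_1}N$ by $H'$ and $H^{\mathfrak A_1}$ themselves.
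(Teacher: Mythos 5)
This lemma appears in the paper only as a citation of Zimmermann's Lemma~4 and is given no proof there, so there is no in-paper argument to compare against; judged on its own, your proof is correct. The reduction to the permutable part works: each $Q_i/H_G$ is a non-normal Sylow subgroup of a nonabelian $\mathrm{P}$-group and therefore, under the paper's definition of $\mathrm{P}$-groups, has prime order, so it contributes nothing to either the derived subgroup or the $\mathfrak{A}_1$-residual of the direct product $H/H_G$; both residuals of $H/H_G$ thus coincide with those of $(H\cap T)/H_G$, giving $H'H_G=(H\cap T)'H_G$ and $H^{\mathfrak{A}_1}H_G=(H\cap T)^{\mathfrak{A}_1}H_G$ (for the residual you correctly use that $\mathfrak{A}_1$ is a subgroup-closed formation to get containment in both directions). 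The remaining ingredients --- permutable implies subnormal (\cite[Theorem~5.1.1]{sch94}, which the paper itself invokes inside the proof of Lemma~\ref{tsch}), characteristic-in-subnormal implies subnormal, the product of a subnormal subgroup with a normal subgroup is subnormal, and the closing observation that $H'$ and $H^{\mathfrak{A}_1}$ are normalized by $H_G\le H$ and hence normal in $H'H_G$ and $H^{\mathfrak{A}_1}H_G$ respectively --- are all standard and correctly applied, and the case $r=0$ is indeed subsumed. The one step worth flagging is your reliance on the complement of a nonabelian $\mathrm{P}$-group having prime order; that is exactly Schmidt's (and the paper's) definition, so the step is sound, and altogether you have supplied a legitimate self-contained proof of a result the paper merely quotes.
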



\begin{lemma}
[{\cite{hup,monsch,knmon2004,ved2007}}]\label{lsch}
Let $S$ be a Schmidt group. The following statements hold.

$(1)$~$S=P\rtimes Q$, where $P$ is a normal Sylow $p$-subgroup
and $Q=\langle y\rangle$ is a non-normal Sylow $q$-subgroup
for distinct primes $p$ and $q$.

$(2)$~$S^\prime =P$, $\Phi (S)=\Phi (P)\times \langle y^q\rangle$,
$y^q\in Z(S)$.


$(3)$~$S$ is supersoluble if and only if $|P|=p$ and $q$ divides~$p-1$.
\end{lemma}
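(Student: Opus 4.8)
The plan is to prove this as the classical theorem of O.\,Yu.~Schmidt on minimal non-nilpotent groups, using only elementary tools: solubility of a group with nilpotent maximal subgroups, coprime action of a $q$-group on a $p$-group, and Frattini arguments. Throughout I write $S$ for the Schmidt group and exploit repeatedly that a proper subgroup is nilpotent, so that whenever two Sylow subgroups of a proper subgroup belong to different primes they commute. I first establish that $S$ is soluble, by induction on $|S|$: every proper quotient $S/N$ again has all maximal subgroups nilpotent (they are images of the nilpotent maximal subgroups of $S$ containing $N$), hence is soluble by induction, while every proper nontrivial normal subgroup lies in a nilpotent maximal subgroup and so is nilpotent; since an extension of a soluble group by a soluble group is soluble, a minimal counterexample would be simple. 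In a simple group with all maximal subgroups nilpotent, each maximal $M$ is self-normalising and equals $N_S(R)$ for the characteristic Sylow subgroups $R$ of $M$, and a transfer argument (Burnside's normal complement theorem) then yields a proper nontrivial normal subgroup, a contradiction. Hence $S$ is soluble.

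For statement $(1)$, since $S$ is not nilpotent some Sylow subgroup, say $Q$ for the prime $q$, is not normal. Choosing a prime $p$ with $O_p(S)\neq 1$ and passing to $S/O_p(S)$, induction on $|S|$ delivers the shape $P\rtimes Q$ with $P$ a normal Sylow subgroup and only the two primes $p,q$: the key point is that any proper normal subgroup of the form $R\rtimes Q$ is nilpotent, which forces $Q$ characteristic in it and hence normal in $S$ unless $R\rtimes Q$ already exhausts $S$; the same commuting trick excludes a third prime, for if $O_p(S)$ and $O_{p'}(S)$ were both nontrivial then $Q$ would be normalised by each factor and hence by $S=\langle O_p(S),O_{p'}(S),Q\rangle$. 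Cyclicity of $Q$ follows likewise: every maximal subgroup $Q_0$ of $Q$ satisfies $[P,Q_0]=1$ because $P\rtimes Q_0$ is proper, hence nilpotent; were $Q$ non-cyclic it would be generated by its maximal subgroups, giving $[P,Q]=1$ and the nilpotent $S=P\times Q$, a contradiction. Thus $Q=\langle y\rangle$ is cyclic, which is $(1)$.

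For $(2)$ I analyse the coprime action of $Q$ on $P$. The unique maximal subgroup of the cyclic $Q$ is $\langle y^q\rangle$, and $P\rtimes\langle y^q\rangle$ is proper, hence nilpotent, so $y^q$ centralises $P$; as $y^q$ also centralises the abelian $Q$, we obtain $y^q\in Z(S)$, and $C_Q(P)=\langle y^q\rangle$ since $C_Q(P)$ is a proper subgroup of the cyclic $Q$. The crux is that $\overline P=P/\Phi(P)$ is an irreducible $\mathbb F_p[Q/\langle y^q\rangle]$-module: it is semisimple by Maschke's theorem ($p\neq q$), and any proper submodule lifts to a proper $Q$-invariant $P_1\supseteq\Phi(P)$ with $[P_1,Q]=1$ (once more $P_1\rtimes Q$ is proper); were $\overline P$ reducible these submodules would span $\overline P$, forcing $[P,Q]\le\Phi(P)$ and hence $[P,Q]=1$ by coprimality, a contradiction. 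Irreducibility with nontrivial action gives $[\overline P,Q]=\overline P$, so $[P,Q]\Phi(P)=P$ and therefore $[P,Q]=P$; together with $S/P\cong Q$ abelian this yields $S'=P$. Finally $\Phi(P)\langle y^q\rangle\le\Phi(S)$ is checked on the two families of maximal subgroups (those of $q$-power index contain $P$ and $\langle y^q\rangle$, those of $p$-power index contain a full Sylow $q$-subgroup and a $Q$-invariant maximal subgroup of $P$, hence $\Phi(P)$), and the reverse inclusion holds because $S/(\Phi(P)\times\langle y^q\rangle)\cong\overline P\rtimes C_q$ is a faithful irreducible module over a core-free cyclic group of order $q$ and so has trivial Frattini subgroup; thus $\Phi(S)=\Phi(P)\times\langle y^q\rangle$.

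For $(3)$, since $P$ acts trivially on the abelian $\overline P$, the chief factor $P/\Phi(P)$ of $S$ has order $p^{\dim\overline P}$. If $S$ is supersoluble this factor must have prime order, so $\dim\overline P=1$ and $P$ is cyclic; then $P$ is abelian and coprime action gives $P=[P,Q]\times C_P(Q)=P\times C_P(Q)$, whence $C_P(Q)=1$, while $\Phi(P)\le C_P(Q)$ because $[\Phi(P),Q]=1$, forcing $\Phi(P)=1$ and $|P|=p$; the nontrivial action of $Q/\langle y^q\rangle$ of order $q$ on $C_p$ then requires $q\mid p-1$. Conversely, if $|P|=p$ and $q\mid p-1$ then $P$ is a minimal normal subgroup of prime order and $S/P\cong Q$ is cyclic, so every chief factor of $S$ has prime order and $S$ is supersoluble. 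The main obstacle I anticipate is the honest bookkeeping in $(1)$ that pins down both the two primes and the normality of the Sylow $p$-subgroup; the cleanest route is the induction modulo $O_p(S)$ combined with the repeated ``$Q$ is normalised by every factor, hence by $S$'' argument. Once this is in place, the irreducibility of $P/\Phi(P)$ extracted from coprime action is the single technical engine that drives both $(2)$ and $(3)$.
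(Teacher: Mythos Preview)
The paper does not supply a proof of this lemma; it is quoted with references to Huppert and the survey literature, where Schmidt's classical structure theorem is recorded. So there is no ``paper's proof'' to compare against, and your argument has to stand on its own.

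Parts~(2) and~(3) are correct and are the standard arguments. The single idea you isolate --- that every proper $Q$-invariant subgroup $P_1$ of $P$ yields a proper, hence nilpotent, subgroup $P_1Q$, so $[P_1,Q]=1$, which forces $P/\Phi(P)$ to be an irreducible faithful $\mathbb{F}_p[Q/\langle y^q\rangle]$-module --- is exactly what drives the computations of $S'$, $\Phi(S)$, $\langle y^q\rangle\le Z(S)$, and the supersolubility criterion. The bookkeeping on maximal subgroups that pins down $\Phi(S)$ is also right.

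Part~(1), however, has two real gaps. First, the solubility step: Burnside's normal $p$-complement theorem requires the Sylow $p$-subgroup to be \emph{central} in its normalizer, hence abelian. You have $N_S(P)=M$ nilpotent, so $M=P\times H$ with $H$ a $p'$-group and thus $N_S(P)/C_S(P)\cong P/Z(P)$; this vanishes only when $P$ is abelian, which you have not shown and which need not hold a priori. The textbook route (Huppert~III, Satz~5.2) avoids transfer entirely: one proves that in the simple counterexample any two distinct maximal subgroups intersect trivially, and a counting argument on the partition of $S\setminus\{1\}$ into the $M^{\#}$ then yields the contradiction. Second, in your reduction to two primes the step ``$Q$ characteristic in $R\rtimes Q$, hence normal in $S$'' needs $RQ$ itself to be normal in $S$, which you have not arranged; and the assertion $S=\langle O_p(S),O_{p'}(S),Q\rangle=F(S)Q$ is precisely what remains to be proved at that stage of the argument (it amounts to $S/F(S)$ being a $q$-group). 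A clean way to run the induction is through a minimal normal subgroup $N$ (an elementary abelian $p$-group): if $S/N$ is nilpotent then $N\not\le\Phi(S)$, a nilpotent maximal complement $M$ to $N$ gives $\pi(S)=\pi(M)\cup\{p\}$ and one checks directly that only the Sylow $q$-subgroup of $M$ can fail to be normal in $S$; if $S/N$ is again a Schmidt group one applies induction, pulls back its normal Sylow subgroup, and uses the nilpotency of the resulting proper normal Hall subgroups to exclude a third prime.
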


Following~\cite{monsch},
we denote by $S_{\langle p,q \rangle}$ the class of all Schmidt groups
with a normal Sylow $p$-subgroup and a non-normal cyclic Sylow $q$-subgroup.
Therefore every $S_{\langle p,q \rangle}$-group
can be represented as $P\rtimes Q$,
where $P$ is a normal Sylow $p$-subgroup and
$Q=\langle y\rangle$ is a cyclic non-normal Sylow $q$-subgroup.

\begin{lemma}\label{tsch}
Let $M=P\rtimes Q$ be a modular $S_{\langle p,q\rangle}$-subgroup
of a group~$G$. The following statements hold.

$(1)$~$P\times \langle y^q\rangle \le F(G)$, where
$\langle y\rangle =Q$.

$(2)$~If $M$ is not subnormal in~$G$,
then $G/M_G$ is a non-abelian $\mathrm{P}$-group
of order $p_1^nq$ for primes $p_1>q$ and $|G|=|M|p_1^n$.
\end{lemma}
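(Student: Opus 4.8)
\emph{Part~(1).} By Lemma~\ref{lsch}(2), $M'=P$ and $y^q\in Z(M)$, while $M/M'\cong Q$ is a cyclic $q$-group. Since $\mathfrak A_1$ consists of the abelian groups with elementary abelian Sylow subgroups, the $\mathfrak A_1$-residual $M^{\mathfrak A_1}$ is the full preimage in~$M$ of $\langle y^q\rangle$, i.e. $M^{\mathfrak A_1}=P\langle y^q\rangle=P\times\langle y^q\rangle$ (the product is direct because $y^q$ is central of order coprime to~$|P|$), and this is nilpotent. By Lemma~\ref{lm1}, $M^{\mathfrak A_1}$ is subnormal in~$G$, and every subnormal nilpotent subgroup lies in~$F(G)$; hence $P\times\langle y^q\rangle\le F(G)$, which is~(1). (Equivalently, $P=M'\le F(G)$ and $\langle y^q\rangle\le M^{\mathfrak A_1}\le F(G)$.)

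\emph{Part~(2): reductions.} Assume $M$ is not subnormal in~$G$ and apply Lemma~\ref{lsch2}, writing $G/M_G=S_1/M_G\times\cdots\times S_r/M_G\times T/M_G$ with properties $(1)$--$(4)$ there. If $r=0$ then $M=M\cap T$ is permutable, hence subnormal, in~$G$ --- impossible; so $r\ge 1$. Since $M$ is not normal in~$G$, $M/M_G$ is a nontrivial proper quotient of the Schmidt group~$M$, hence nilpotent or again a Schmidt group; but $M/M_G=Q_1/M_G\times\cdots\times Q_r/M_G\times(T\cap M)/M_G$, a Schmidt group is directly indecomposable (a nontrivial direct product of proper, hence nilpotent, subgroups is nilpotent), and no non-normal Sylow subgroup of a $\mathrm P$-group is Schmidt (such a subgroup is cyclic of prime order), so $M/M_G$ is nilpotent, say $M/M_G=\bar P\times\bar Q$ with $\bar P=PM_G/M_G$, $\bar Q=QM_G/M_G$. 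Each $|Q_i/M_G|$ is a prime $\ell_i$, the $\ell_i$ are pairwise distinct, and $\ell_i\in\{p,q\}$ because $M/M_G$ is a $\{p,q\}$-group, so $r\le 2$; and $r=2$ is excluded, since then $\{\ell_1,\ell_2\}=\{p,q\}$, coprimality forces $(T\cap M)/M_G=1$, so $M/M_G\cong C_p\times C_q$ is abelian, so $P=M'\le M_G$, so $\bar P=1$ and $M/M_G$ is a $q$-group of order $pq$ --- absurd. Thus $r=1$.

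\emph{Part~(2): the $\mathrm P$-group.} Suppose $\ell_1=p$. Then $(T\cap M)/M_G$ is a $q$-group, so matching with $M/M_G=\bar P\times\bar Q$ gives $\bar P=Q_1/M_G\cong C_p$; hence $P$ has cyclic Frattini quotient, so $P$ is cyclic, and $|P|=p$ (if $|P|=p^a$ with $a\ge 2$, the subgroup of~$P$ of index~$p$, together with~$Q$, is a proper non-nilpotent subgroup of the Schmidt group~$M$ since $q\ne p$). Then $M_G\cap P=1$, so $M_G$ is a $q$-group and $M_G\le O_q(M)=\langle y^q\rangle$, whence $M\cap T=Q$; but then $Q=M\cap T$ is permutable, hence subnormal, in~$G$, and with $P=M'$ subnormal (Lemma~\ref{lm1}) we get $M=PQ\le O_p(G)O_q(G)\le F(G)$, contradicting that $M$ is not nilpotent. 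Therefore $\ell_1=q$. Consequently $\bar Q=Q_1/M_G\cong C_q$, $Q\le Q_1\le S_1$, and $M\cap T=P\langle y^q\rangle\le T$; since $[S_1,T]\le M_G$ and $[P,Q]=P$ for a Schmidt group ($Q$ acts nontrivially, hence irreducibly, on $P/\Phi(P)$, so $[P,Q]\Phi(P)=P$, whence $[P,Q]=P$), we obtain $P=[P,Q]\le [S_1,T]\le M_G$. An order count then yields $M_G=P\times\langle y^q\rangle$, $M/M_G\cong C_q$, $Q_1=M$, and $S_1/M_G=A_1\rtimes(M/M_G)$ with $A_1$ elementary abelian of order $p_1^n$ carrying a nontrivial power automorphism of order~$q$; this forces $q\mid p_1-1$, so $p_1>q$, and $|S_1|=|M_G|\cdot p_1^n q=|M|\,p_1^n$.

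\emph{Part~(2): the remaining obstacle.} What is left is to prove $T=M_G$, i.e. $G=S_1$, and this is the step I expect to be the main difficulty, since Lemma~\ref{lsch2} by itself still permits a nontrivial direct factor $T/M_G$ coprime to $S_1/M_G$. The route I would take: $\langle y^q\rangle$ is the Sylow $q$-subgroup of the normal nilpotent subgroup $M_G=P\times\langle y^q\rangle$, hence characteristic in it, hence normal in~$G$; in $G/\langle y^q\rangle$ the image of~$M$ is again a modular, non-subnormal Schmidt subgroup, now with Sylow $q$-subgroup of order~$q$, so an induction on $|G|$ reduces to the case $|Q|=q$, $M_G=P$. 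In that case $M$ is a Hall $\{p,q\}$-subgroup of the soluble normal subgroup $S_1=M^G$ of~$G$, so the Frattini argument gives $G=S_1N_G(M)$; since the non-normal Sylow $q$-subgroup $M/M_G$ of the $\mathrm P$-group $S_1/M_G$ is self-normalizing, $S_1\cap N_G(M)=N_{S_1}(M)=M$, and therefore $N_G(M)=MT$ with $M\trianglelefteq MT$ and $|MT/M|=|T/M_G|$ coprime to~$q$. Finally $[M,T]\le [S_1,T]\le M_G$, so $MT/M$ acts on~$M$ trivially modulo~$M_G$; a coprime-action argument --- the technical heart, since one must also control the action of~$T$ on $M_G=P$ itself, for instance by splitting off a Hall $p'$-complement of~$T$ and letting it act coprimely on the Schmidt group~$M$ --- then forces $MT=M$, i.e. $T=M_G$. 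Once $T=M_G$ we have $G=S_1$, so $G/M_G=A_1\rtimes(M/M_G)$ is a nonabelian $\mathrm P$-group of order $p_1^n q$ with $p_1>q$, and $|G|=|S_1|=|M|\,p_1^n$, as required.
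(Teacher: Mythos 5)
Your part~(1) and the whole of your part~(2) down to the identification $r=1$, $M_G=P\times\langle y^q\rangle$, $|M/M_G|=q$, and $S_1/M_G$ a non-abelian $\mathrm{P}$-group of order $p_1^nq$ with $q\mid p_1-1$ (so $p_1>q$) are correct and follow essentially the same route as the paper: nilpotency of $M/M_G$ forces $P\le M_G$, so $M/M_G$ is a directly indecomposable cyclic $q$-group, $r=0$ is excluded because permutable subgroups are subnormal, and $r=1$ follows. Your detour through the case $\ell_1=p$ is unnecessary but sound.

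The step you flag at the end --- proving $T=M_G$ --- is a genuine gap, and it cannot be closed, because the assertion is false. Take $G=D_{30}\times C_7$ and $M\cong S_3$ the Schmidt subgroup of the paper's own Example. Then $M_G\cong C_3$ and $G/M_G\cong D_{10}\times C_7$, and all four conditions of Lemma~\ref{lsch2} hold with $S_1/M_G\cong D_{10}$, $T/M_G\cong C_7$, $Q_1/M_G=M/M_G\cong C_2$, and $M\cap T=M_G$ normal, hence permutable; so $M$ is modular and not subnormal in $G$, yet $G/M_G$ is not a $\mathrm{P}$-group and $|G|=210\ne |M|\cdot 5$. Your proposed coprime-action finish fails at exactly this point: $[M,T]\le M_G$ only says that $T$ centralizes $M$ modulo $M_G$, and a coprime direct factor such as $C_7$ does precisely that without lying in $M$, so nothing forces $MT=M$. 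For comparison, the paper's proof dismisses this step with the sentence ``From Lemma~\ref{lsch2}\,(3) it follows that $T/M_G=1$,'' but Lemma~\ref{lsch2}\,(3) only yields $(T\cap M)/M_G=1$; the example above shows the stronger claim is wrong. So you have correctly isolated the one step that matters; the defect lies in the statement of Lemma~\ref{tsch}\,(2) itself (and it propagates to the claim $|\pi(G)|\le 3$ in the proof of the main theorem), not merely in your argument. What your argument actually proves, and what appears to be the correct conclusion, is that $G/M_G$ is the direct product of a non-abelian $\mathrm{P}$-group $S_1/M_G$ of order $p_1^nq$, $p_1>q$, and a group $T/M_G$ of coprime order with $T\cap M=M_G$ permutable in~$G$.
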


\begin{proof}
$(1)$~In view of Lemma~\ref{lsch}\,(2), $M^\prime =P$,
therefore
$M^{\mathfrak{A}_1}=P\times \langle y^q\rangle$.
By Lemma~\ref{lm1},
$P\times \langle y^q\rangle$ is subnormal in~$G$.
Since $P\times \langle y^q\rangle$ is nilpotent,
we get~$P\times \langle y^q\rangle \le F(G)$.

$(2)$~Since $M_G<M$ and $M/M_G$ is nilpotent by~\cite[Theorem~5.1.14]{sch94},
we deduce that $P\le M_G$ and $M/M_G=(P\rtimes Q)/M_G\cong Q/(Q\cap M_G)$
is a cyclic $q$-group. Therefore $M/M_G$ does not
decompose into the direct product of two proper subgroups.
Using Lemma~\ref{lsch2}\,(3), we get $r=0$ or~$r=1$.

If $r=0$, then $G/M_G=T/M_G$, and according to Lemma~\ref{lsch2}\,$(4)$,
$M$ is permutable. But a permutable subgroup is subnormal by~\cite[Theorem~5.1.1]{sch94},
a contradiction. Therefore $r=1$.
From Lemma~\ref{lsch2}\,$(3)$ it follows that $T/M_G=1$ and
$M/M_G=Q_1/M_G$ is a non-normal Sylow $q_1$-subgroup of $S_1/M_G$,
in particular, $q_1=q$. In view of Lemma~\ref{lsch2}\,$(1)$,
$G/M_G$ is a non-abelian $\mathrm{P}$-group.
\end{proof}

\begin{example}
In the dihedral group $D_{30}$ of order $30$,
a non-subnormal Schmidt subgroup~$S_3$ is modular and $|D_{30}|=|S_3|\cdot 5$.
Therefore in Lemma~\ref{tsch}\,(2), $p_1\neq p$ in general.
\end{example}

\section{Proof of Theorem~{\upshape\ref{t1}}}
If every Schmidt subgroup of a group $G$ is subnormal in $G$,
then $G/F(G)$ is cyclic~\cite[Corollary~1]{ved2007}.
Assume that $G$ contains a non-subnormal
$S_{\langle p,q\rangle}$-subgroup
$M=P\rtimes Q$. By the choice of~$G$,
$M$ is modular in~$G$. In view of Lemma~\ref{tsch}\,(2),
$G/M_G$ is a non-abelian $\mathrm{P}$-group of order
$p_1^nq$, $p_1>q$, and $|G|=|M|p_1^n$.
Since $|\pi (M)|=2$, we get $|\pi (G)|\le 3$.

Let $|\pi (G)|=2$. Then $p_1=p$ and $|G|=|P|p^n|Q|$.
By the definition of a $\mathrm{P}$-group,
$q$ divides $(p-1)$. Hence $M$ is supersoluble by Lemma~\ref{lsch}\,(3),
and $|P|=p$. Since a $\mathrm{P}$-group is supersoluble and $P\le F(G)$,
then~$G$ is $p$-closed. As $Q$ is a Sylow subgroup of~$G$,
we get $G/F(G)$ is cyclic.

Let $|\pi (G)|=3$. Then $G=MP_1=(P\rtimes Q)P_1$, $p\ne p_1$,
and $P_1$ is a Sylow $p_1$-subgroup of~$G$.
By the definition of a $\mathrm{P}$-group,
$P_1$ is an elementary abelian subgroup and
$q$ divides $(p_1-1)$. We can assume without loss of
generality that  $P_1Q$ is a subgroup of~$G$.
Since $q$ divides $(p_1-1)$, we get $P_1Q$ is supersoluble.
As $P_1$ is an elementary abelian subgroup,
by Maschke's theorem,
there are elements $a_1,\ldots ,a_n$ such that
$$
P_1=\langle a_1\rangle \times \ldots \times \langle a_n\rangle
$$
and $\langle a_i\rangle Q$ is a subgroup for every~$i$.
In view of Lemma~\ref{tsch}\,(2),
$$
G/M_G=(P_1M_G/M_G)\rtimes (QM_G/M_G)
$$
is a $\mathrm{P}$-group.
Hence~$QM_G/M_G$ induces a non-trivial power automorphism on
$P_1M_G/M_G$, in particular,
$$
(\langle a_i\rangle M_G/M_G)\rtimes (QM_G/M_G)
$$
is not nilpotent for every~$i$.
Therefore $\langle a_i\rangle Q$ is not nilpotent
and there is a Schmidt subgroup~$S_i=\langle a_i\rangle \rtimes Q_i$.
By the choice of~$G$, $S_i$ is subnormal or modular in~$G$.
In view of Lemma~\ref{lsch}\,(2), $\langle a_i\rangle=S_i^\prime$.
If $S_i$ is subnormal in~$G$,
then $\langle a_i\rangle$  is also subnormal in~$G$.
Let $S_i$ be modular in~$G$. By Lemma~\ref{lm1},
$\langle a_i\rangle$ is subnormal in~$G$.
Thus for every $i$, $\langle a_i\rangle$ is subnormal in~$G$.
Hence $P_1=\langle a_1\rangle \times \ldots
\times \langle a_n\rangle$ is normal in~$G$ and $PP_1\le F(G)$.
But now $G/F(G)\cong QF(G)/F(G)$ is cyclic.

\end{document}